\newcommand{\IR}{\mathbb{R}}
\newcommand{\mcT}{\mathcal{T}}
\newcommand{\mcX}{\mathcal{X}}
\newcommand{\mcK}{\mathcal{K}}
\newcommand{\mcF}{\mathcal{F}}
\newcommand{\mcKh}{\mcK_h}        
\newcommand{\mcTh}{\mcT_h}
\newcommand{\tn}{|\mspace{-1mu}|\mspace{-1mu}|}
\newcommand{\bfI}{\boldsymbol I}
\newcommand{\bfV}{\boldsymbol V}
\newtheorem{lem}{Lemma}[section]
\newtheorem{thm}{Theorem}[section]
\newenvironment{proof}{\noindent \newline {\bf Proof.}}
{\hfill \mbox{\fbox{} } \newline}
\title{\bf Cut Bogner-Fox-Schmit Elements  for Plates}
\date{}
\begin{document}

\author[$\star$]{Erik Burman}
\author[$\dagger$]{Peter Hansbo}
\author[$\ddagger$]{Mats G. Larson}
\affil[$\star$]{\footnotesize  Department of Mathematics, University College London, London, UK--WC1E  6BT, United Kingdom}

\affil[$\dagger$]{\footnotesize Department of Mechanical Engineering, J\"onk\"oping University, SE-55111 J\"onk\"oping, Sweden}

\affil[$\ddagger$]{\footnotesize Department of Mathematics and Mathematical Statistics, Ume{\aa} University, SE-90187 Ume{\aa}, Sweden}

\maketitle

\begin{abstract} We present and analyze  a method for thin plates based on cut Bogner-Fox-Schmit elements, which are $C^1$ elements obtained by taking tensor products of Hermite splines. The formulation is based on Nitsche's method for weak enforcement of essential boundary conditions together with addition of certain stabilization terms that enable us to 
establish coercivity and stability of the resulting system of linear equations. We also take geometric approximation of the boundary into account and we focus our presentation on the 
simply supported boundary conditions which is the most sensitive case for geometric approximation of the boundary.
\end{abstract}

\noindent {\footnotesize Keywords: Kirchhoff plate, cut finite element method, rectangular plate element.}

%
\section{Introduction} The Bogner-Fox-Schmit (BFS) element \cite{BoFoSc65} is a classical $C^1$ thin plate element obtained by taking tensor products of cubic Hermite splines. The element is only $C^1$ on tensor product (rectangular) elements, which is a serious drawback since it severely limits the applicability of the resulting finite element method. However, on geometries allowing for tensor product discretization it is generally considered to be one of the most efficient elements for plate analysis, cf. \cite[p. 153]{ZiTa00}.
It is also a reasonably low order element for plates which is very simple to implement, in contrast with triangular elements which either use higher order polynomials, such as the Argyris element \cite{ArFrSc68}, or macro element techniques, such as the Clough--Tocher element \cite{ClTo65}. The construction of curved versions  of these elements for boundary fitting can also be cumbersome, see, e.g., \cite{BeBo93}. It should be noted that the use of straight line segments for discretizing the boundary is not to be recommended, not only because of accuracy issues but also due to Babu\v ska's paradox for simply supported plates, see \cite{BaPi90}. 

To remedy the problem of geometry discretization for the BFS element,
we herein develop a cut finite element version, allowing for discretizing a smooth boundary 
which may cut through the tensor product mesh in an arbitrary manner. Adding stabilization 
terms on the faces associated with elements that intersect the boundary, we obtain a stable method with optimal order convergence. We prove a priori error estimates which also take 
approximation of the boundary into account. The focus of the analysis is on simply supported boundary conditions, the computationally most challenging case.  

The paper is organized as follows. In Section 2, we recall the thin plate Kirchhoff model;
in Section 3 we formulate the cut finite element method; in Section 4 we present the analysis of the method starting with a sequence of technical results 
leading up to a Strang Lemma and an estimate of the consistency error and finally a priori error estimates in the energy and $L^2$ norms. In Section 5, we present some numerical illustrations, and in Section 6 some concluding remarks.

\section{The Kirchhoff Plate} 

Consider a simply supported thin plate in a domain $\Omega\subset \IR^2$ with smooth 
boundary $\partial \Omega$. The displacement $u:\Omega \rightarrow \IR$ satisfies
\begin{equation}\label{eq:plate-strong}
\nabla \cdot (\sigma(\nabla u) \cdot \nabla) = f
\end{equation}
where the stress tensor is 
\begin{equation}\label{eq:stress}
\sigma(\nabla v) = \kappa (\epsilon(\nabla v) +\nu (1-\nu)^{-1} (\nabla \cdot (\nabla v) ) I
= \kappa( \nabla \otimes \nabla v + \nu (1-\nu)^{-1} (\Delta v) I))
\end{equation}
where the strain tensor is defined by
\begin{equation}\label{eq:plate-strain}
\epsilon (\nabla v) = ( (\nabla v) \otimes \nabla  + \nabla \otimes (\nabla v) )/2 = \nabla \otimes \nabla v
\end{equation}
and $\kappa$ is the parameter
\begin{equation}
\kappa =  \frac{E t^3}{12(1+\nu)}  
\end{equation}
with $E$ the Young's modulus, $\nu$ the Poisson's ratio, and $t$ the 
plate thickness. Since $0 \leq  \nu \leq 0.5$ both $\kappa$ and 
$\nu(1-\nu)^{-1}$ are uniformly bounded. 

We shall focus our presentation on simply supported boundary conditions
\begin{equation}\label{eq:plate-ss}
u = 0 \quad \text{on $\partial \Omega$}, \qquad M_{nn}(u) = 0 \quad \text{on $\partial \Omega$}
\end{equation}
where the moment tensor $M$ is defined by
\begin{equation}\label{eq:moment}
M(u) = \sigma(\nabla u)
\end{equation}
and $M_{ab} = a\cdot M \cdot b$ for $a,b \in \IR^2$.

The weak form of (\ref{eq:plate-strong}) and (\ref{eq:plate-ss}) takes the form: find 
$u \in V = \{ v \in H^2(\Omega) : \text{$v=0$ on $\partial \Omega$}\}$ such that 
\begin{equation}\label{eq:weak}
a(u,v) = l(v) \qquad \forall v \in V
\end{equation}
where 
\begin{equation}\label{eq:form-a}
a(v,w) = (\sigma(\nabla v),\epsilon(\nabla w))_\Omega 
= 
\kappa ( (\nabla \otimes v,\nabla \otimes \nabla w)_\Omega + \nu (1-\nu)^{-1} (\Delta v,\Delta w)_\Omega)
\end{equation}
and $l(v) = (f,v)_\Omega$. The form  $a$ is symmetric, continuous, and coercive on 
$V$ equipped with the $H^2(\Omega)$ norm and it follows from the Lax-Milgram 
theorem that there exists a unique solution in $V$ to the (\ref{eq:weak}). Furthermore, 
for smooth boundary and $f \in L^2$ we have the elliptic regularity 
\begin{equation}\label{eq:elliptic-reg}
\| u \|_{H^4(\Omega)} \lesssim \| f \|_\Omega
\end{equation}

\section{The Finite Element Method}
\paragraph{The Mesh and Finite element Space.}
We begin by introducing the following notation.
\begin{itemize}
\item Let $\widetilde{\mcT}_h$, $h\in (0,h_0]$, be a family of 
partitions of $\IR^2$ into squares with side $h$.  Let $\widetilde{V}_h$ be the 
Bogner-Fox-Schmit space consisting of tensor products of cubic Hermite 
splines on $\widetilde{\mcT}_h$.

\item Let $\rho$ be the signed distance function associated with $\partial \Omega$ 
and let $U_\delta(\partial \Omega) = \{x\in \IR^2 : |\rho(x)|< \delta\}$ be the tubular neighborhood 
of $\partial \Omega$ of thickness $2\delta$. Then there is $\delta_0>0$ such that the closest point mapping $p:U_{\delta_0}(\partial \Omega) \rightarrow \partial \Omega$ is a well defined function of the form $p(x) = x - \rho(x) n(p(x))$. 

\item Let $\{\Omega_h, h \in (0,h_0]\}$ be a family of approximations of 
$\Omega$ such that $\partial \Omega_h \subset U_{\delta_0}(\partial \Omega)$ 
is piecewise smooth  and
\begin{equation}\label{eq:omegah-dist}
\| \rho \|_{L^\infty(\partial \Omega_h)} \lesssim h^4
\end{equation}
\begin{equation}\label{eq:omegah-normal}
\|n(p) - n_h\|_{L^\infty(\partial \Omega_h)} \lesssim h^3
\end{equation}
Furthermore, we assume that for each element $T$ such that $\partial \Omega_h$ intersects 
the interior of $T$, i.e. $\text{int}(T) \cap \partial \Omega_h \neq \emptyset$, the curve segment $\partial \Omega_h \cap T$ is smooth and intersect the boundary $\partial T$ of $T$ in precisely  two different points. Let $\mcX_h$ be the set of all points where $\partial 
\Omega_h$ is not smooth.

\item 
Let $\mcT_h = \{T \in \widetilde{\mcT}_h : T \cap \Omega_h \neq \emptyset\}$ 
be the active mesh. Let $\mcT_{h,I}$ be the set of elements such that $T\subset \Omega$ 
and let $\mcF_{h,I}$ be the set of interior faces in $\mcT_{h,I}$. Let $\mcT_{h,B} = \mcT_h \setminus \mcT_{h,I}$ and $\mcF_{h,B} = \mcF_h \setminus \mcF_{h,I}$.

\item Let $V_h$ be the restriction of $\widetilde{V}_h$ to  $\mcT_h$. Let $\mcKh = \mcTh \cap \Omega_h$ be the intersection of the active elements $T$ with $\Omega_h$.
\end{itemize}

\paragraph{The Finite Element Method.}
The method reads: find $u_h \in V_h$ such that 
\begin{equation}\label{eq:fem}
\boxed{
A_h(u_h,v) = l_h(v) \qquad \forall v \in V_h
}
\end{equation}
The forms are defined by
\begin{align}\label{eq:Ah}
A_h(v,w) &= a_h(v,w)+\beta s_h(v,w) 
\\ \label{eq:ah}
a_h(v,w) &=  (\sigma(\nabla v), \epsilon(\nabla w ) )_{\Omega_h} 
+(T(v),w)_{\partial \Omega_h} +(v,T(w))_{\partial \Omega_h} + \gamma h^{-3} (v,w)_{\partial \Omega}
\\ \label{eq:sh}
s_h(v,w) &= h ( [ \nabla^2_n v],  [ \nabla^2_n w])_{\mcF_{h,B}}  
+ h^3 ( [ \nabla^3_n v],  [ \nabla^3_n w])_{\mcF_{h,B}} 
\\
l_h(v) &= (f, v)_{\Omega_h}
\end{align}
where 
\begin{equation}
T = (M\cdot \nabla)_n + \nabla_t M_{nt}
\end{equation}
with sub-indices $n$ and $t$ indicating scalar product with the normal and tangent of 
$\partial \Omega_h$, and $\beta, \gamma$ are positive parameters which are proportional to $\kappa$. Here $s_h$ is a stabilization form, which provides the necessary control of 
the cut elements, see (\ref{eq:stab-pairwise}). The bilinear form, apart from the stabilization terms, stems from Nitsche's method \cite{Nit70}, first analyzed for plates 
in a discontinuous Galerkin setting in \cite{HaLa02}.

\section{Error Estimates}

\subsection{Basic Properties of $\boldmath{A}_{\boldmath{h}}$}
\paragraph{The Energy Norm.}
Define the following energy norm on $V+V_h$, 
\begin{align}\label{eq:energy-norm}
\tn v \tn_h^2&=\tn v \tn^2_{\Omega_h} +   \beta \| v \|^2_{s_h} 
+ h^3 \| T(v ) \|^2_{ \partial \Omega_h} + h^{-3}\| v \|^2_{\partial \Omega} 
\end{align}
where 
\begin{equation}
\tn v \tn^2_{\Omega_h} =  ( \sigma(\nabla v),\epsilon(\nabla v)_{\Omega_h} 
\end{equation}
and we use the standard notation $\| v \|^2_{s_h} = s_h(v,v)$. 

\paragraph{Stabilization.}
The stabilization term provides us with the following bound
\begin{align}\label{eq:stab-global}
\boxed{
\| \nabla^j v \|^2_{\mcTh} \lesssim \| \nabla^j v \|_{\mcT_{h,I}} + h^{2(2-j)} \| v \|^2_{s_h}, 
\qquad j=0,1,2,3
}
\end{align}
which follows from the standard estimate 
\begin{equation}\label{eq:stab-pairwise}
\| \nabla^j v \|^2_{T_2 }\lesssim  \| \nabla^k j \|^2_{T_1 } 
+ \sum_{k=j}^p h^{2(k-j)} \| [ \nabla^k v ] \|^2_{F}
\end{equation}
where $T_1$ and $T_2$ are elements that share the edge $F$, and 
$v|_{T_i} \in \mathbb{P}_p(T_i)$, the space of polynomials of order $p$. See 
for instance \cite{HaLaLa17}, \cite{MasLarLog14} for further details.

\paragraph{Continuity and Coercivity.}
The form $A_h$ is continuous
\begin{align}\label{eq:continuity }
A_h(v,w) \lesssim \tn v \tn_h \tn w \tn_h \qquad v,w \in V + V_h
\end{align}
which follows directly from the Cauchy-Schwarz inequality, and for $\gamma$ 
large enough coercive
\begin{align}\label{eq:coercivity}
\tn v \tn_h^2 \lesssim A_h(v,v) \qquad v \in V_h
\end{align}
{\noindent\newline {\bf Verification of (\ref{eq:coercivity}).}}
Using inverse inequalities followed by the stabilization estimate (\ref{eq:stab-global}) we 
obtain
\begin{align}
\kappa^{-1} h^3 \| T(v) \|^2_{\partial \Omega_h} 
&\lesssim \kappa h^2 \| \nabla^3 v \|^2_{\mcTh (\partial \Omega_h)} 
\lesssim \kappa \| \nabla^2 v \|^2_{\mcTh (\partial \Omega_h)} 
\\
&\qquad 
\lesssim  \kappa (\| \nabla^2 v \|^2_{\Omega} + \| v \|^2_{s_h})
\lesssim \tn v \tn^2_{\Omega_h} + \kappa \| v \|^2_{s_h}
\end{align}
and thus there is a constant $C_*$ such that 
\begin{align}
\kappa^{-1} h^3 \| T(v) \|^2_{\partial \Omega_h}  \leq C_* ( \tn v \tn^2_{\Omega_h} + \kappa \| v \|^2_{s_h} )
\end{align}
We then have 
\begin{align}
A_h(v,v) &= \tn v \tn^2_{\Omega_h} + \beta \| v \|^2_{s_h} 
- 2(T(v),v)_{\partial \Omega_h} + \gamma h^{-3} \| v \|^2_{\partial \Omega_h}
\\
&\geq 
\tn v \tn^2_{\Omega_h} + \beta \| v \|^2_{s_h} 
-  \delta \kappa^{-1}h^3 \|T(v)\|^2_{\partial \Omega_h}  + (\gamma - \delta^{-1} \kappa) h^{-3} \|v\|^2_{\partial \Omega_h} 
\\
&\geq 
(1 - C_* \delta ) \tn v \tn^2_{\Omega_h} + (\beta - \kappa C_* \delta) \| v \|^2_{s_h} 
+ (\gamma - \delta^{-1} \kappa )h^{-3} \|v\|^2_{\partial \Omega_h} 
\end{align}
and we find that taking $\delta$ small enough to guarantee that $1 - C_* \delta \geq m>0$, 
$\beta$ large enough to guarantee that $\beta - \kappa C_* \delta \geq m$, and $\gamma$ large enough  to guarantee that  $\gamma - \delta^{-1} \kappa h^{-3} \geq m$ 
the coercivity (\ref{eq:coercivity}) follows.
{\hfill \mbox{\fbox{} } \newline}
%
%
%
%

\subsection{Interpolation}

Let $I_{h}:C^1(\IR^2) \rightarrow V_h$ be the standard element wise interpolant 
associated with the degrees of freedom in $V_h$. Then we have the estimate
\begin{equation}\label{eq:interpol-element}
\| v - I_{h} v \|_{H^m(T)} \lesssim h^{4-m} \| v \|_{H^4(T)}\qquad m=0,1,2,3
\end{equation}
To construct an interpolation operator for cut elements we recall that given $v \in H^s(\Omega)$ there is an extension operator $E:H^s(\Omega) \rightarrow H^s(\IR^2)$ such that 
\begin{align}\label{eq:ext-stability}
\|E v \|_{H^s(\IR^2)} \lesssim \| v \|_{H^s(\Omega)} 
\end{align}
for all $s>0$. Then we define the interpolation operator
\begin{equation}\label{eq:def-interpol-cut}
C^1(\Omega)  \ni v \mapsto I_{h} (E v ) = \pi_h v \in V_{h}
\end{equation}
Combining (\ref{eq:interpol-element}) with (\ref{eq:ext-stability}) we obtain the 
interpolation error estimate
\begin{equation}\label{eq:interpol-general}
\boxed{\| v - \pi_h v \|_{H^m(\mcTh)} \lesssim  h^{4-m} \| v \|_{H^4(\Omega)}\qquad m=0,1,2,3}
\end{equation}
For the energy norm we have the estimate
\begin{equation}\label{eq:interpol-energy}
\boxed{\tn v - \pi_h v \tn_h \lesssim h^2 \| v \|_{H^4(\Omega)}}
\end{equation}
{\noindent \newline {\bf Verification of (\ref{eq:interpol-energy}).}} Let $\eta =  v - \pi_h v$ 
and recall that
\begin{align}
\tn \eta \tn^2_h = \tn \eta \tn^2_{\Omega_h} + \| \eta \|^2_{s_h}
+ h^3\| T(\eta) \|^2_{\partial \Omega_h}
+ h^{-3} \| \eta \|^2_{\partial \Omega_h} 
\end{align}
The first term is directly estimated using (\ref{eq:interpol-general}), 
\begin{equation}
 \| \eta \|^2_{a_h} \lesssim h^4 \| v \|^2_{H^4(\Omega)}
\end{equation}
For the second term we employ the trace inequality 
\begin{align}\label{eq:trace-element}
\| w \|^2_{\partial T} \lesssim h^{-1} \| w \|^2_T + h \| \nabla w \|^2_T
\end{align} 
 to conclude that 
\begin{align}
&\| \eta \|^2_{s_h} = \sum_{j=2}^3 h^{2j - 3} \| [ \nabla_n^j \eta ] \|^2_{\mcF_{h,B}}
\lesssim  \sum_{j=2}^3 h^{2j - 3} 
(h^{-1} \| \nabla_n^j \eta \|^2_{\mcT_{h,B}} + h \| \nabla_n^{j+1} \eta \|^2_{\mcT_{h,B}} )
\\
&\qquad \lesssim  \sum_{j=2}^3 h^{2j - 4} 
(\| \nabla_n^j \eta \|^2_{\mcT_{h,B}} + h^2 \| \nabla_n^{j+1} \eta \|^2_{\mcT_{h,B}} )
\lesssim h^4 \| v \|^2_{H^4(\mcT_{h,B})}
\lesssim h^4 \| v \|^2_{H^4(\Omega)}
\end{align}
For the third term we use the trace inequality 
\begin{align}\label{eq:trace-omegah}
\| v \|^2_{\partial \Omega_h} \lesssim \delta^{-1} \| v \|^2_{U_\delta(\partial \Omega)\cap \Omega} + \delta \| \nabla v \|^2_{U_\delta(\partial \Omega)\cap \Omega} 
\end{align}
with $\delta\sim h$,
\begin{align}
h^3 \|T(\eta)\|^2_{\partial \Omega_h} 
&\lesssim 
h^3 ( \delta^{-1} \| \nabla^3 \eta \|^2_{\mcT_h( U_\delta(\partial \Omega)\cap \Omega)} 
+ \delta \| \nabla^4 \eta \|^2_{\mcTh(U_\delta(\partial \Omega)\cap \Omega)} )
\lesssim h^4 \| v \|^2_{H^4(\Omega)}
\end{align}
Finally, the fourth term is estimated in the same way as the third,
\begin{align}
h^{-3}\| \eta  \|^2_{\partial \Omega_h} 
&\lesssim
h^{-3} ( \delta^{-1} \| \eta \|^2_{\mcTh(U_\delta(\partial \Omega) \cap \Omega)} 
+ \delta\| \nabla \eta  \|^2_{\mcTh(U_\delta(\partial \Omega) \cap \Omega)}  )
\lesssim
h^4 \| v \|^2_{H^4(\mcTh(\partial \Omega_h))}
\end{align}
which completes the verification of (\ref{eq:interpol-energy}). 
{\hfill \mbox{\fbox{} } \newline}

\subsection{Consistency Error Estimate}

 \begin{lem} Let $u$ be the exact solution to (\ref{eq:plate-strong}) with boundary 
 conditions  (\ref{eq:plate-ss}),  and $u_h$ the finite element approximation defined 
 by (\ref{eq:fem}),  then 
 \begin{equation}\label{eq:strang}
\boxed{
\tn u - u_h \tn_{h} \lesssim \tn u - \pi_h u \tn_h + \sup_{v \in V_h\setminus \{0\}}
\frac{A_h(u,v) - l_h(v)}{\tn v \tn_h}
}
\end{equation}
 \end{lem}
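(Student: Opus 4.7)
The plan is a standard Strang-type argument: split the error through the interpolant and control the discrete part using coercivity together with the non-Galerkin-orthogonality measured by the supremum term.

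First I would write $u - u_h = (u - \pi_h u) + (\pi_h u - u_h)$ and apply the triangle inequality in the energy norm, leaving me to bound $\tn \pi_h u - u_h \tn_h$ in terms of the right-hand side of (\ref{eq:strang}). Since $v_h := \pi_h u - u_h \in V_h$, the coercivity (\ref{eq:coercivity}) gives
\begin{equation*}
\tn v_h \tn_h^2 \lesssim A_h(v_h, v_h) = A_h(\pi_h u - u, v_h) + A_h(u, v_h) - A_h(u_h, v_h).
\end{equation*}
Using the discrete equation (\ref{eq:fem}) I replace $A_h(u_h, v_h)$ by $l_h(v_h)$, so that
\begin{equation*}
\tn v_h \tn_h^2 \lesssim A_h(\pi_h u - u, v_h) + \bigl(A_h(u, v_h) - l_h(v_h)\bigr).
\end{equation*}
The first term is handled by continuity (\ref{eq:continuity }) on $V + V_h$, giving $A_h(\pi_h u - u, v_h) \lesssim \tn u - \pi_h u \tn_h \, \tn v_h \tn_h$. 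The second term is bounded by $\tn v_h \tn_h$ times the supremum in (\ref{eq:strang}).

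Dividing through by $\tn v_h \tn_h$ (the case $\tn v_h \tn_h = 0$ being trivial) yields the required bound for $\tn \pi_h u - u_h \tn_h$, and combining with the triangle inequality finishes the proof. The only subtle point is that continuity of $A_h$ must genuinely hold on $V + V_h$ rather than just on $V_h \times V_h$, since $u - \pi_h u$ is not in $V_h$; this is exactly what (\ref{eq:continuity }) provides, and the boundary and stabilization terms in the energy norm (\ref{eq:energy-norm}) were designed precisely so that each piece of $A_h$ can be controlled by Cauchy--Schwarz without any inverse inequalities on the continuous factor. Apart from this observation, the argument is entirely routine.
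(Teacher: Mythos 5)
Your proposal is correct and follows essentially the same route as the paper's proof: triangle inequality through the interpolant, coercivity applied to $\pi_h u - u_h \in V_h$, continuity of $A_h$ on $V+V_h$ for the interpolation part, and the discrete equation (\ref{eq:fem}) to convert $A_h(u_h,v)$ into $l_h(v)$ and expose the consistency supremum. The only cosmetic difference is that you test directly with $v_h = \pi_h u - u_h$ and divide, whereas the paper phrases the same step as a supremum over $v \in V_h$; these are equivalent.
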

 \begin{proof}
Adding and subtracting an interpolant we obtain 
\begin{equation}
\tn u - u_h \tn_{h} 
\leq \tn u - \pi_h u \tn_h + \tn \pi_h u - u_h \tn_h 
\end{equation}
Using coercivity we can estimate the second term on the right hand side as follows
\begin{align}
\tn \pi_h u - u_h \tn_{h} &\leq 
\sup_{v \in V_h\setminus \{0\}}
\frac{A_h(\pi u - u_h,v) }{\tn v \tn_h}
\\
&\leq 
\sup_{v \in V_h\setminus \{0\}} \frac{A_h(\pi u - u,v) }{\tn v \tn_h} 
+ \sup_{v \in V_h\setminus \{0\}} \frac{A_h(\pi u - u_h,v) }{\tn v \tn_h}
\\
&\leq \tn \pi_h u - u \tn_h + \sup_{v \in V_h\setminus \{0\}} \frac{A_h(\pi u,v) - l_h(v)}{\tn v \tn_h}
\end{align}
where we added and subtracted $u$ in the numerator and for the first term used the estimate $A_h(\pi u - u,v)  \lesssim \tn \pi_h u - u_h \tn_h \tn v \tn_h$ and for the 
second used (\ref{eq:fem}) to eliminate $u_h$. Combining the estimates
 the desired result follows directly.
 \end{proof}

\begin{lem} Let $\varphi \in H^4(\IR^2)$ and $v \in V+V_h$, then
\begin{align}\label{eq:part-int}
(\nabla \cdot (M(\varphi) \cdot \nabla),v )_{\Omega_h} 
&= (M(\varphi) , \epsilon(\nabla v) )_{\Omega_h} 
- ( M_{nn}(\varphi), \nabla_n v)_{\partial \Omega_h} 
\\
&\qquad 
+ (T(\varphi),v)_{\partial \Omega_h} 
+ ([M_{nt}],v)_{\mcX_h} 
\end{align}
where, for $x\in \mcX_h$, $[M_{nt}]_x$ is defined by
\begin{align}
[M_{nt}] |_x = M(x)_{n_h^+ t_h^+} - M(x)_{n_h^- t_h^-} 
\end{align}
In the case of $C^1$ boundary  $(v, [M_{nt}])_{\mcX_h}=0$.
\end{lem}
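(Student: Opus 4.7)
The plan is to perform two integrations by parts on $\Omega_h$, transferring both divergences off $M(\varphi)$ onto $v$, and then to treat the resulting boundary term by decomposing gradients on $\partial \Omega_h$ into normal and tangential components and performing a tangential integration by parts that produces the corner contributions at $\mcX_h$.

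First I would apply the divergence theorem with the vector field $M(\varphi)\cdot\nabla$ to obtain
\begin{equation*}
(\nabla\cdot(M(\varphi)\cdot\nabla),v)_{\Omega_h} = -(M(\varphi)\cdot\nabla,\nabla v)_{\Omega_h} + ((M(\varphi)\cdot\nabla)_n,v)_{\partial\Omega_h},
\end{equation*}
which already produces the $(M\cdot\nabla)_n$ piece of $T(\varphi)$. A second integration by parts on the bulk term, using symmetry of $M(\varphi)$ so that $M(\varphi){:}\,\nabla\otimes\nabla v = (M(\varphi),\epsilon(\nabla v))$, yields
\begin{equation*}
-(M(\varphi)\cdot\nabla,\nabla v)_{\Omega_h} = (M(\varphi),\epsilon(\nabla v))_{\Omega_h} - (M(\varphi)\cdot n_h, \nabla v)_{\partial\Omega_h}.
\end{equation*}
On $\partial\Omega_h$ I would then split $\nabla v = (\nabla_n v)\, n_h + (\nabla_t v)\, t_h$; the normal component contributes $-(M_{nn}(\varphi),\nabla_n v)_{\partial\Omega_h}$, matching one of the target terms, while the tangential component contributes $-(M_{nt}(\varphi),\nabla_t v)_{\partial\Omega_h}$ and has to be processed further.

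The tangential term is handled by integration by parts in arclength on each smooth arc of $\partial\Omega_h$, which is well defined by our assumption that $\partial\Omega_h$ is piecewise smooth with non-smooth points collected in $\mcX_h$. On each arc this produces $(\nabla_t M_{nt}(\varphi),v)$, the remaining piece of $T(\varphi)$, plus two endpoint contributions. Summing over arcs, endpoints meet pairwise at the corner points $x\in\mcX_h$; with the orientation/sign convention built into the definition $[M_{nt}]_x = M(x)_{n_h^+ t_h^+} - M(x)_{n_h^- t_h^-}$, these combine into exactly $+([M_{nt}(\varphi)],v)_{\mcX_h}$, which is the final term of the claimed identity. The $C^1$ statement is then immediate, since in that case $\partial\Omega_h$ has no non-smooth points and $\mcX_h=\emptyset$.

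The main obstacle is the bookkeeping at the corners: one must fix consistent orientations of the smooth arcs of $\partial\Omega_h$, identify the one-sided normals and tangents $n_h^\pm$, $t_h^\pm$ associated with the two arcs meeting at a point $x\in\mcX_h$, and verify that the asymmetric definition of $[M_{nt}]_x$ produces the correct overall sign after combining the endpoint contributions with the minus sign coming from the tangential integration by parts. Once this is set up carefully, the rest reduces to routine Green's-identity manipulations on the smooth portions of $\Omega_h$ and $\partial\Omega_h$.
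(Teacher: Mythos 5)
Your proposal follows the paper's proof essentially step for step: two bulk integrations by parts, the normal/tangential splitting of $\nabla v$ on $\partial\Omega_h$, and an arclength integration by parts on the smooth arcs whose endpoint contributions assemble into the $([M_{nt}],v)_{\mcX_h}$ term. The approach and the sign bookkeeping at the corners are the same as in the paper, so there is nothing to add.
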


\begin{proof} Using the simplified notation $M = M(\varphi)$ and $T=T(\varphi)$ 
for brevity we obtain by integrating by parts 
\begin{align}
(\nabla \cdot (M\cdot \nabla),v)_{\Omega_h} 
&=( (M \cdot \nabla)_n,v )_{\partial \Omega_h}
- (M \cdot \nabla, \nabla v )_{\Omega_h} 
\\  \label{eq:part-int-a}
&=  ((M \cdot \nabla)_n, v )_{\partial \Omega_h}
- (M_n, \nabla v )_{\partial \Omega_h}
+ (M, \epsilon(\nabla v))_{\Omega_h} 
\end{align}
Splitting $\nabla v$ in tangent and 
normal contributions on $\partial \Omega_h$, we have the identity
\begin{align}
 (\nabla v, M_n )_{\partial \Omega_h \cap T}
&=  
(\nabla_n v, M_{nn} )_{\partial \Omega_h \cap T}
 +  (\nabla_t v, M_{nt} )_{\partial \Omega_h \cap T} 
 \\ 
& =  
(\nabla_n v, M_{nn} )_{\partial \Omega_h\cap T} -  (v, \nabla_t M_{nt} )_{\partial \Omega_h \cap T} + (v, M_{nt} t \cdot \nu)_{ \partial (\partial \Omega_h \cap T )}
\end{align}
where we integrated by parts along the curve segments $\partial \Omega_h \cap T$, and 
$\nu$ is the exterior unit tangent vector to $\partial \Omega_h \cap T$. Summing over all elements that intersect $\partial \Omega_h$, we obtain the identity 
\begin{align}\label{eq:part-int-b}
 (\nabla v, M_n )_{\partial \Omega_h}
& =  
(\nabla_n v, M_{nn} )_{\partial \Omega_h} -  (v, \nabla_t M_{nt} )_{\partial \Omega_h} + (v, [M_{nt}])_{\mcX_h}
\end{align}
Combining (\ref{eq:part-int-a}) and (\ref{eq:part-int-b}), we obtain
\begin{align}
((v,\nabla \cdot (M\cdot \nabla) )_{\Omega_h} 
&=  (\epsilon(\nabla v) ,M)_{\Omega_h}  
-  (\nabla_n v, M_{nn} )_{\partial \Omega_h}
\\
&\qquad 
+ (v, (M \cdot \nabla)_n  +\nabla_t M_{nt}  )_{\partial \Omega_h}
- (v, [M_{nt}])_{\mcX_h} 
\end{align}
and setting $T = (M \cdot \nabla)_n  +\nabla_t M_{nt}$ we obtain the desired result.
\end{proof}

\begin{lem} Let $u$ be the exact solution to (\ref{eq:plate-strong}) with boundary conditions 
 (\ref{eq:plate-ss}), then there is a constant such that for all $v\in V_h$,
\begin{align} \label{eq:consistency-bound}
\boxed{A_h(u ,v) - l_h(v) \lesssim  
h^4 \| u \|_{H^4(\Omega)} \tn v \tn_{h,\bigstar}
\lesssim 
h^{5/2} \| u \|_{H^4(\Omega)}  \tn v \tn_{h}}
\end{align}
where $\tn v \tn_{h,\bigstar}$ is the norm
\begin{equation}\label{eq:energ-star-norm}
\tn v \tn^2_{h,\bigstar} = \tn v \tn_h^2 + \| T(v) \|^2_{\partial \Omega_h} + h^{-6} \| v \|^2_{\partial \Omega_h}  \leq (1+ h^{-3}) \tn v \tn^2_h
\end{equation}
\end{lem}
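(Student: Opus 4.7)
\noindent The plan is to identify the consistency residual via the partial-integration identity in Lemma 4.2 and then estimate each surviving term using the geometric approximation bounds (\ref{eq:omegah-dist})--(\ref{eq:omegah-normal}) together with the simply supported boundary conditions. First I apply Lemma 4.2 with $\varphi=u$ to rewrite the volume term $(M(u),\epsilon(\nabla v))_{\Omega_h}$ appearing in $a_h(u,v)$. Using that $\nabla\cdot(M(u)\cdot\nabla)=f$ on $\Omega$, with $u$ smoothly extended across $\partial\Omega$ so that the identity holds on $\Omega_h$ modulo a harmless $O(h^4)$ volume error on the thin strip $\Omega_h\setminus\Omega$, the resulting volume piece cancels $l_h(v)$, and the $(T(u),v)_{\partial\Omega_h}$ term produced by Lemma 4.2 cancels the symmetric term in $a_h$. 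What remains is
\[
A_h(u,v)-l_h(v) = (M_{nn}(u),\nabla_n v)_{\partial\Omega_h}+(u,T(v))_{\partial\Omega_h}+\gamma h^{-3}(u,v)_{\partial\Omega_h}-([M_{nt}(u)],v)_{\mcX_h}+\beta s_h(u,v).
\]

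\noindent Each of the five remaining terms vanishes on the exact geometry with exact normals. The stabilization term is the simplest: the smooth extension of $u\in H^4(\Omega)$ has no jumps in its normal derivatives, so $s_h(u,v)=0$. For the other four I convert the boundary conditions $u=0$ and $M_{nn}(u)=0$ on $\partial\Omega$ into pointwise smallness on $\partial\Omega_h$ by a Taylor expansion along the normal from $\partial\Omega_h$ to $\partial\Omega$. This gives $|u(x)|\lesssim|\rho(x)|\,\|u\|_{C^1}\lesssim h^4\|u\|_{H^4(\Omega)}$ via (\ref{eq:omegah-dist}) and, after also accounting for the change of normal, a bound of the form $|M_{n_h n_h}(u)(x)|\lesssim|\rho(x)|\,\|u\|_{C^3}+|n_h-n|\,\|u\|_{C^2}$ on $\partial\Omega_h$. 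The Cauchy--Schwarz pairings are then closed using the weights of the enhanced norm: $\|T(v)\|_{\partial\Omega_h}\le\tn v\tn_{h,\bigstar}$ absorbs $(u,T(v))$, $\|v\|_{\partial\Omega_h}\le h^3\tn v\tn_{h,\bigstar}$ absorbs $\gamma h^{-3}(u,v)$, and $\|\nabla_n v\|_{\partial\Omega_h}$ together with the pointwise corner values $|v(x)|$ are controlled by the element trace inequality (\ref{eq:trace-element}), an inverse estimate and the stabilization bound (\ref{eq:stab-global}), exactly as in the verification of coercivity. Once the first inequality of (\ref{eq:consistency-bound}) is established, the second follows immediately from the already stated bound $\tn v\tn^2_{h,\bigstar}\le(1+h^{-3})\tn v\tn^2_h$ of (\ref{eq:energ-star-norm}).

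\noindent The main technical obstacle is the natural-boundary term $(M_{nn}(u),\nabla_n v)_{\partial\Omega_h}$. The crude pointwise decomposition of $M_{n_h n_h}(u)$ only yields $h^3\|u\|_{H^4}$ through the normal-approximation estimate (\ref{eq:omegah-normal}), so the remaining power of $h$ needed to match the claimed $h^4$ rate must come from the companion factor $\|\nabla_n v\|_{\partial\Omega_h}$, via a careful use of (\ref{eq:stab-global}) to transfer the boundary-layer mass of $\nabla_n v$ to the interior mesh. The analogous counting argument for the finitely many corner points in $\mcX_h$ — where one must weigh the jump $|[M_{nt}(u)]|\lesssim h^3\|u\|_{H^4}$ against a pointwise inverse estimate for $v$ tempered by the Nitsche boundary control — is similarly delicate; it is precisely these two pairings that dictated the enhanced weights $\|T(v)\|^2_{\partial\Omega_h}$ (without any $h^3$ factor) and $h^{-6}\|v\|^2_{\partial\Omega_h}$ in the definition of $\tn\cdot\tn_{h,\bigstar}$.
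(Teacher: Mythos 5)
Your overall strategy coincides with the paper's: apply the partial integration identity of Lemma 4.2 with $\varphi=u$, cancel the volume term against $l_h$ and $(T(u),v)_{\partial\Omega_h}$ against its symmetric counterpart, note $s_h(u,v)=0$, and estimate the four surviving terms $(M_{nn}(u),\nabla_n v)_{\partial\Omega_h}$, $([M_{nt}],v)_{\mcX_h}$, $(u,T(v))_{\partial\Omega_h}$, $\gamma h^{-3}(u,v)_{\partial\Omega_h}$ using the boundary conditions together with (\ref{eq:omegah-dist})--(\ref{eq:omegah-normal}); your assignment of the star-norm weights to the third and fourth terms is exactly the paper's. However, there are two genuine gaps in the execution. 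First, for the term $(M_{nn}(u),\nabla_n v)_{\partial\Omega_h}$ you claim the missing power of $h$ ``must come from the companion factor $\|\nabla_n v\|_{\partial\Omega_h}$, via a careful use of (\ref{eq:stab-global})''. This step would fail: the stabilization transfers mass from cut to interior elements but produces no extra power of $h$ in the boundary trace of $\nabla_n v$, and for a generic $v\in V_h$ one only has $\|\nabla_n v\|_{\partial\Omega_h}\lesssim\tn v\tn_h$ with no gain. The paper proves precisely this $O(1)$ bound, (\ref{eq:traceomega_h}), and does so not by element-wise trace plus inverse estimates (which would cost $h^{-1/2}$ and require a Poincar\'e bound on $\|\nabla v\|_{\mcT_h}$ that the energy norm does not directly supply) but by a trace inequality on the fixed interior domain $\Omega\setminus U_{\delta_0}(\partial\Omega)$. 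The full factor $h^4$ is then extracted entirely from $\|M_{nn}(u)\|_{\partial\Omega_h}\lesssim\delta\|u\|_{H^{3+\epsilon}(\Omega)}$ with $\delta\sim h^4$, i.e.\ from the estimate (\ref{eq:wbnd-bound-strong}) applied to a function vanishing on $\partial\Omega$, with no appeal to the normal error (\ref{eq:omegah-normal}) for this term.

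Second, the corner term is not closed by ``a pointwise inverse estimate tempered by the Nitsche boundary control''. The paper needs the specific bound $\|v\|^2_{\mcX_h}\lesssim h^2\tn v\tn^2_h$, and its proof is genuinely local: one extends $v|_{T_x}$ to a polynomial $v_x$ on a ball $B_r(x)$ with $r\sim h$, uses $|\partial\Omega_h\cap B_r(x)|\sim h$ to convert the point value into a curve integral, and controls $\|v-v_x\|$ by a local version of (\ref{eq:stab-global}); a naive inverse estimate at $x$ fails because $x$ sits on an element boundary and the nearby piece of $\partial\Omega_h$ need not lie in $T_x$. With this bound the corner term already comes out as $h^4\|u\|_{H^4(\Omega)}\tn v\tn_h$, so your closing remark that the corner pairing (together with Term $I$) ``dictated the enhanced weights'' of $\tn\cdot\tn_{h,\bigstar}$ is a misattribution and contradicts your own, correct, assignment of those weights to $(u,T(v))_{\partial\Omega_h}$ and $\gamma h^{-3}(u,v)_{\partial\Omega_h}$ earlier in the argument.
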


\begin{proof}  Using the definition (\ref{eq:Ah}), the fact that $s_h(u,v) = 0$ for 
$u \in H^4(\Omega)$, and the partial integration identity (\ref{eq:part-int})  we obtain
\begin{align}
A_h(u,v)- l_h(v)
&= (M(u),\epsilon(\nabla v))_{\Omega_h} 
+ (T(u),v)_{\partial \Omega_h} + (u,T(v))_{\partial \Omega_h}
\\
&\qquad 
+ \gamma h^{-3}(u,v)_{\partial \Omega_h} 
- (\nabla \cdot ( M(u) \cdot \nabla ) ,v)_{\Omega_h} 
\\
&=
(M_{nn}(u), \nabla_n v )_{\partial \Omega_h} 
+ ([M_{nt}],v)_{\mcX_h}
\\ 
&\qquad
+(u, T(v))_{\partial \Omega_h} 
+ \gamma h^{-3} (u,v)_{\partial \Omega_h} 
\\
&=I + II + III + IV
\end{align}

Before turning to the estimates of $I-IV$ we first recall the following estimates from \cite{BuHaLa18}. 
There is a constant such that for all 
$w\in H^1_0(\Omega)$,
\begin{align}\label{eq:wbnd-bound-weak}
\|w \|_{\partial \Omega_h} 
\lesssim \delta^{1/2} \| w \|_{H^1(U_\delta(\partial \Omega))} 
\lesssim \delta^{1/2} \| w \|_{H^1(U_{\delta_0}(\partial \Omega)\cup \Omega)} 
\lesssim  \delta^{1/2} \| w \|_{H^1(\Omega)} 
\end{align}
for $0< \delta < \delta_0$ such that $\partial \Omega_h \subset U_\delta(\partial \Omega_h)$, where we at last used the stability (\ref{eq:ext-stability}) of the extension. 
In view of the geometry approximation assumption  (\ref{eq:omegah-dist}) we may take $\delta \sim h^4$. Furthermore,we may strengthen the estimate as follows 
\begin{align}\label{eq:wbnd-bound-strong}
\|w \|_{\partial \Omega_h} 
\lesssim \delta \| w \|_{W_1^\infty(U_\delta(\partial \Omega))} 
\lesssim \delta \| w \|_{W_1^\infty(U_\delta \cup \Omega)}
\lesssim \delta \| w \|_{H^{2+\epsilon}(U_{\delta_0} \cup \Omega)}
\lesssim \delta \| w \|_{H^{2+\epsilon}(\Omega)}
\end{align}
where we used the Sobolev embedding theorem and the stability (\ref{eq:ext-stability})
of the extension operator and we may take $\delta \sim h^4$.

\paragraph{$\bfI$.} Using (\ref{eq:wbnd-bound-strong}) with $w = M_{nn}(u)$,
\begin{align}
&(M_{nn}(u), \nabla_n v )_{\partial \Omega_h} 
\lesssim 
\| M_{nn}(u)\|_{\partial \Omega_h} \|\nabla_n v \|_{\partial \Omega_h} 
\lesssim 
\delta \| u \|_{H^4(\Omega)} \tn v \tn_h
\end{align}
where we finally used the estimate 
\begin{equation}\label{eq:traceomega_h}
\|\nabla_n v \|_{\partial \Omega_h} \lesssim  \tn v \tn_h
\end{equation}
which we prove as follows. Recalling that $\partial \Omega_h \subset U_{\delta_0}$ for 
all $h \in (0,h_0]$ it follows that $\Omega \setminus U_{\delta_0}(\partial \Omega) 
\subset \Omega_h$ for all $h \in (0,h_0]$. Since $\Omega \setminus U_{\delta_0}(\partial \Omega)$ is independent of $h$ we have the trace inequality 
\begin{equation}
\|\nabla v \|_{\partial (\Omega \setminus U_{\delta_0}(\partial \Omega) )} 
\lesssim 
\| v \|_{H^2(\Omega \setminus U_{\delta_0}(\partial \Omega) )} 
\end{equation}
with hidden constant independent of $h$. We then obtain
\begin{align}
\|\nabla_n v \|^2_{\partial \Omega_h}  
&\lesssim \|\nabla v \|^2_{\partial \Omega_h}  
\lesssim 
\delta_0 \|\nabla^2 v \|^2_{\Omega_h \setminus (\Omega \setminus U_{\delta_0}(\partial \Omega) )} + \|\nabla v \|^2_{\partial (\Omega \setminus U_{\delta_0}(\partial \Omega) ) }  
\\
&\qquad 
\lesssim
\delta_0 \|\nabla^2 v \|^2_{\Omega_h \setminus (\Omega \setminus U_{\delta_0}(\partial \Omega) )} 
+ \| v \|_{H^2(\Omega \setminus U_{\delta_0}(\partial \Omega) )} 
\lesssim  \tn v \tn_h^2
\end{align}

\paragraph{$\bfI\bfI$.} Using the assumption on the accuracy of the discrete normal 
(\ref{eq:omegah-normal}) we have for each $x\in \mcX_h$,
\begin{align}
|[M_{nt}]| = M^+_{n_h t_h} - M^-_{n_h t_h}=  M^+_{n_h t_h} -M_{n t}
+ M_{n t}  -  M^-_{n_h t_h}
\end{align}
where the first term on the right hand side can be estimated as follows 
\begin{equation}
|M_{n_h t_h}  -  M^-_{n t} |\leq |(n_h -n )\cdot M \cdot t_h |+ | n\cdot M \cdot (t_h - t)|
\lesssim h^3 |M|
\end{equation}
We then have
\begin{align}
([M_{nt}],v)_{\mcX_h} 
&\leq \|[M_{nt}]\|_{\mcX_h} \|v\|_{\mcX_h}
\lesssim h^3 \| M \|_{\mcX_h} \|v\|_{\mcX_h}
\lesssim h^2 h^{1/2}\| M \|_{\mcX_h} h^{1/2} \|v\|_{\mcX_h}
\\
&\qquad 
\lesssim h^2 \| M \|_{L^\infty(\mcX_h)} h^2 \tn v \tn_h
\lesssim h^4 \| u \|_{H^4(\Omega)} \tn v \tn_h
\end{align}
where we used the fact that the number of elements , denoted by
$| \mcX_h |$, in $\mcX_h$  satisfies $| \mcX_h | \sim h^{-1}$, and the Sobolev inequality 
to obtain $h \| M \|^2_{\mcX_h} \lesssim \| u \|^2_{H^4(\Omega)}$, and the estimate
\begin{align}
h \|v\|^2_{\mcX_h}
&\lesssim
\| v \|^2_{\partial \Omega_h} 
\lesssim 
h^3 \tn v \tn^2_h
\end{align}
Here the second estimate follows directly from the definition of the energy norm and to 
verify the first consider $x\in \mcX_h$ and let $B_r(x)$ be a ball of radius $r \sim h$ 
centred at $x$. Let $T_x\in \mcT_h$ be one of the elements such that $x \in \partial T_x$ 
and given $v\in V_h$ let $v_x$ be the extension to $\IR^2$ of $v|_{T_x}$. 
We then have 
\begin{equation}\label{eq:consist-II-d}
h \| v \|^2_{\mcX_h} 
\lesssim 
\sum_{x \in \mcX_h} h |v(x)|^2
\lesssim 
\sum_{x \in \mcX_h} \| v_x \|^2_{\partial \Omega_h \cap B_r(x)}
\end{equation}
where we used the fact that $|{\partial \Omega_h \cap B_r(x)}| \sim h$, which follows 
from (\ref{eq:omegah-dist}) and (\ref{eq:omegah-normal}) together with a change of coordinates to the exact surface. Let $\mcT_{h,x} = \mcT_h(B_r(x))$, let $\mcF_{h,x}$ be the interior faces in $\mcT_{h,x}$, and let $s_{h,x}$ be defined by (\ref{eq:sh}) with $\mcF_{h,B}$ 
replaced by $\mcF_{h,x}$. We then have the bound
\begin{align}
\| v - v_x \|^2_{\mcT_{h,x}} 
\lesssim 
h^4 \| v \|^2_{s_{h,x}}
\end{align}
which is a local version of (\ref{eq:stab-global}) on the patch $\mcT_{h,x}$. Adding and subtracting $v$ we have 
\begin{align}
\| v_x \|^2_{\partial \Omega_h \cap B_r(x)} &\lesssim 
\| v - v_x \|^2_{\partial \Omega_h \cap B_r(x)} + \| v \|^2_{\partial \Omega_h \cap B_r(x)}
\\
&\lesssim 
h^{-1} \| v - v_x \|^2_{\mcT_{h,x}} + \| v\|^2_{\partial \Omega_h \cap B_r(x)}
\\ \label{eq:consist-II-e}
&\lesssim 
h^3 \| v \|^2_{s_{h,x}} + \| v\|^2_{\partial \Omega_h \cap B_r(x)}
\end{align}
Combining (\ref{eq:consist-II-d}) and (\ref{eq:consist-II-e}) we obtain
\begin{align}
\| v \|^2_{\mcX_h} \lesssim \sum_{x \in \mcX_h} h^3 \| v \|^2_{s_{h,x}} + \| v\|^2_{\partial \Omega_h \cap B_r(x)}
\lesssim h^3 \tn v \tn_h
\end{align}
where we used the fact that the number of balls $B_r(y)$, $y\in \mcX_h$, that intersect $B_r(x)$ is uniformly bounded independent of $x \in \mcX_h$ and $h \in (0,h_0]$.

\paragraph{$\bfI\bfI\bfI$.} Using (\ref{eq:wbnd-bound-strong}) with $w=u$,
\begin{align}
(u, T(v))_{\partial \Omega_h} &\leq \|u\|_{\partial \Omega_h}  \|T(v)\|_{\partial \Omega_h} 
\lesssim \delta \| u \|_{H^4(\Omega)} \|T(v)\|_{\partial \Omega_h}  
\\
&\qquad \lesssim \delta h^{-3/2} \| u \|_{H^4(\Omega)} h^{3/2} \|T(v)\|_{\partial \Omega_h}  
\lesssim  h^{5/2} \| u \|_{H^4(\Omega)}\tn v \tn_h
\end{align}

\paragraph{$\bfI\bfV$.} Proceeding in the same way as for Term $III$,
\begin{align}
h^{-3}(u,v)_{\partial \Omega_h} 
&\lesssim h^{-3} \| u \|_{\partial \Omega_h} \| v \|_{\partial \Omega_h} 
\lesssim \delta h^{-3} \| u \|_{H^4(\Omega)} \| v \|_{\partial \Omega_h} 
\\
&\qquad \lesssim  \delta h^{-3/2} \| u \|_{H^4(\Omega)} \tn v \tn_h 
\lesssim  h^{5/2}  \| u \|_{H^4(\Omega)} \tn v \tn_h
\end{align}
Combining the estimates we find that 
\begin{align}
A_h(u,v) - l_h(v) &\lesssim  h^4 \| u \|_{H^4(\Omega)} \tn v \tn_h  
+ h^4  (\|T(v) \|_{\partial \Omega_h} + h^{-3} \|v \|_{\partial \Omega_h} ) 
\\
&\lesssim 
h^{5/2} \| u \|_{H^4(\Omega)} \tn v \tn_h  
\end{align}
which completes the proof.
%

\end{proof}

\subsection{Error Estimates}

\begin{thm} The finite element solution defined by (\ref{eq:fem}) satisfies
\begin{equation}\label{eq:error-est-energy}
\boxed{\tn u - u_h \tn_h \lesssim h^2 \| u \|_{H^4(\Omega)}}
\end{equation}
\end{thm}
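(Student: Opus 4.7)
The plan is to combine the three boxed estimates established in the previous subsections: the Strang-type bound (\ref{eq:strang}), the energy-norm interpolation estimate (\ref{eq:interpol-energy}), and the consistency bound (\ref{eq:consistency-bound}). Since all three results are already in hand, the proof is essentially a one-line assembly.

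First I would invoke the Strang lemma to write
\begin{equation*}
\tn u - u_h \tn_h \lesssim \tn u - \pi_h u \tn_h + \sup_{v \in V_h \setminus \{0\}} \frac{A_h(u,v) - l_h(v)}{\tn v \tn_h}.
\end{equation*}
Next, the interpolation estimate (\ref{eq:interpol-energy}) applied with the exact solution $u \in H^4(\Omega)$ (which holds by the elliptic regularity (\ref{eq:elliptic-reg})) immediately controls the first term by $h^2 \| u \|_{H^4(\Omega)}$. For the second term I would apply the consistency estimate (\ref{eq:consistency-bound}), which provides a bound of order $h^{5/2} \| u \|_{H^4(\Omega)}$ on the numerator uniformly in $v$.

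Combining these two contributions and noting that for $h \in (0, h_0]$ the consistency contribution $h^{5/2}$ is dominated by the interpolation contribution $h^2$, the estimate (\ref{eq:error-est-energy}) follows.

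There is really no obstacle here since all the difficult work has been front-loaded into the Strang lemma, the interpolation estimate, and in particular the consistency lemma whose proof required the fine handling of the geometric error terms $I$ through $IV$. The only point worth emphasizing in the writeup is that one should use the weaker bound $h^{5/2}$ on the consistency (rather than the sharper $h^4$ bound in the $\tn \cdot \tn_{h,\bigstar}$ norm), because the right-hand side of the Strang lemma measures the residual against $\tn v \tn_h$, not against $\tn v \tn_{h,\bigstar}$; the gain in the geometric terms is enough that $h^{5/2}$ is asymptotically absorbed by the $O(h^2)$ interpolation error.
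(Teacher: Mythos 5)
Your proposal is correct and follows exactly the paper's own argument: insert the second bound of the consistency estimate (\ref{eq:consistency-bound}) into the Strang lemma (\ref{eq:strang}) and control the interpolation term with (\ref{eq:interpol-energy}), the $h^{5/2}$ consistency contribution being absorbed by the $h^2$ interpolation error. Your remark about using the $\tn \cdot \tn_h$-scaled bound rather than the $\tn \cdot \tn_{h,\bigstar}$ one is precisely the point the paper relies on.
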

\begin{proof}
 Using the second bound of (\ref{eq:consistency-bound}) in 
(\ref{eq:strang}) followed by the interpolation error bound (\ref{eq:interpol-energy}) 
we directly get the desired estimate.
\end{proof}


\begin{thm}   The finite element solution defined by (\ref{eq:fem}) satisfies
\begin{equation}\label{eq:error-est-Ltwo}
\boxed{\| u - u_h \|_{\Omega_h} \lesssim h^4 \| u \|_{H^4(\Omega)}}
\end{equation}
\end{thm}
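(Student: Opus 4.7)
The plan is to use an Aubin--Nitsche duality argument adapted to the cut finite element setting. Let $e = u - u_h$ and introduce the dual problem: find $\phi \in V$ such that $a(w,\phi) = (e,w)_{\Omega \cap \Omega_h}$ for all $w \in V$, extending $u$ outside $\Omega$ via the extension operator (\ref{eq:ext-stability}) where needed. By symmetry of $a$ and the elliptic regularity (\ref{eq:elliptic-reg}), $\|\phi\|_{H^4(\Omega)} \lesssim \|e\|_{\Omega_h}$, and $\phi$ satisfies the simply supported boundary conditions $\phi = M_{nn}(\phi) = 0$ on $\partial \Omega$.

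The next step is to express $\|e\|_{\Omega_h}^2$ in terms of the discrete form $A_h$. Starting from $\|e\|_{\Omega \cap \Omega_h}^2 = a(\phi,e)$ and applying the partial integration identity (\ref{eq:part-int}), one obtains $\|e\|_{\Omega_h}^2 = A_h(\phi,e) + R$, where $R$ collects the geometric boundary residuals involving $\phi|_{\partial \Omega_h}$, $M_{nn}(\phi)|_{\partial \Omega_h}$, the jumps $[M_{nt}(\phi)]$ at $\mcX_h$, and the $L^2$ mismatch on $\Omega \triangle \Omega_h$. Because $\phi$ and $M_{nn}(\phi)$ vanish on $\partial \Omega$, the strengthened trace estimate (\ref{eq:wbnd-bound-strong}) with $\delta \sim h^4$ gives $R \lesssim h^4 \|\phi\|_{H^4(\Omega)}$, by the same mechanism that controlled terms $\bfI$ and $\bfI\bfI$ in the consistency lemma.

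I would then decompose $A_h(\phi,e) = A_h(\phi - \pi_h \phi, e) + A_h(\pi_h \phi, e)$. The first piece is handled by continuity of $A_h$ together with the interpolation estimate (\ref{eq:interpol-energy}) and the energy error estimate (\ref{eq:error-est-energy}), yielding $\lesssim h^2 \|\phi\|_{H^4(\Omega)} \cdot h^2 \|u\|_{H^4(\Omega)}$. For the second piece, symmetry of $A_h$ and the Galerkin identity $A_h(u_h,\pi_h\phi) = l_h(\pi_h\phi)$ give $A_h(\pi_h \phi, e) = A_h(u,\pi_h \phi) - l_h(\pi_h \phi)$, which by the sharp (first) form of the consistency bound (\ref{eq:consistency-bound}) is controlled by $h^4 \|u\|_{H^4(\Omega)} \tn \pi_h \phi \tn_{h,\bigstar}$.

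The remaining technical step is a stability bound $\tn \pi_h \phi \tn_{h,\bigstar} \lesssim \|\phi\|_{H^4(\Omega)}$. Splitting $\pi_h \phi = \phi - (\phi - \pi_h \phi)$, the contribution of $\phi$ is controlled using the vanishing boundary data together with standard trace and Sobolev inequalities (the $h^{-6}\|\phi\|_{\partial \Omega_h}^2$ term is absorbed thanks to $\|\phi\|_{\partial \Omega_h} \lesssim h^4 \|\phi\|_{H^4(\Omega)}$), while the interpolation-error piece is bounded by the same arguments used to verify (\ref{eq:interpol-energy}), relying on (\ref{eq:trace-omegah}) with $\delta \sim h$. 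Combining everything gives $\|e\|_{\Omega_h}^2 \lesssim h^4 \|u\|_{H^4(\Omega)} \|e\|_{\Omega_h}$, from which the claim follows by dividing. The main obstacle I anticipate is the careful bookkeeping of the residual $R$: because the dual problem lives on $\Omega$ but the norm of interest is on $\Omega_h$, one must verify that every domain-mismatch contribution is genuinely $O(h^4)$, which is only achievable by exploiting both the vanishing of the boundary data of $\phi$ on $\partial \Omega$ and the sharp distance bound $\|\rho\|_{L^\infty(\partial \Omega_h)} \lesssim h^4$.
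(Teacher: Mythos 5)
Your argument is correct in substance but follows a genuinely different route from the paper. The paper first splits $u-u_h=(u-\pi_h u)+(\pi_h u-u_h)$ and applies duality only to the discrete part: it introduces the \emph{discrete} dual problem $A_h(v,\phi_h)=(v,\psi)_{\Omega_h}$ for all $v\in V_h$, tests with $v=\pi_h u-u_h$, and then splits into $A_h(\pi_h u-u,\phi_h-\phi)+A_h(\pi_h u-u,\phi)+\bigl(A_h(u,\phi_h)-l_h(\phi_h)\bigr)$, so the consistency bound (\ref{eq:consistency-bound}) is applied with the discrete dual solution $\phi_h$ and the required stability $\tn\phi_h\tn_{h,\bigstar}\lesssim\|\psi\|_{\Omega_h}$ is obtained by adding and subtracting $\phi$. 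You instead use the \emph{continuous} dual problem with data $e=u-u_h$, integrate by parts via (\ref{eq:part-int}) to write $\|e\|^2=A_h(\phi,e)+R$, and use $\pi_h\phi$ as the discrete test function in the Galerkin/consistency step, with the stability $\tn\pi_h\phi\tn_{h,\bigstar}\lesssim\|\phi\|_{H^4(\Omega)}$. The paper's choice avoids your residual $R$ entirely (at the price of having to integrate $A_h(\pi_h u-u,\phi)$ by parts, which is the same computation in disguise), while your choice avoids introducing $\phi_h$ and its energy estimate twice. Both lead to the same $h^4$ rate through the same three mechanisms: interpolation of the dual solution, the sharp form of (\ref{eq:consistency-bound}), and the vanishing boundary data of $\phi$ combined with $\|\rho\|_{L^\infty(\partial\Omega_h)}\lesssim h^4$.

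Two bookkeeping points you should tighten. First, $R$ is bilinear, so the bound cannot read $R\lesssim h^4\|\phi\|_{H^4(\Omega)}$; the terms $(M_{nn}(\phi),\nabla_n e)_{\partial\Omega_h}$, $(\phi,T(e))_{\partial\Omega_h}$ and $h^{-3}(\phi,e)_{\partial\Omega_h}$ must each be paired with the already-proved energy estimate (\ref{eq:error-est-energy}) together with $\|\nabla_n e\|_{\partial\Omega_h}\lesssim\tn e\tn_h$ from (\ref{eq:traceomega_h}), yielding $R\lesssim h^4\|\phi\|_{H^4(\Omega)}\|u\|_{H^4(\Omega)}$. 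Second, since your dual data lives on $\Omega\cap\Omega_h$, the sliver contribution $\|e\|^2_{\Omega_h\setminus\Omega}$ is not covered by the duality identity and needs its own estimate; it is harmless (a tube of width $\sim h^4$ plus the boundary trace control $\|e\|^2_{\partial\Omega_h}\lesssim h^3\tn e\tn_h^2$ gives a contribution far below $h^8\|u\|^2_{H^4(\Omega)}$), but it must be stated, and the same remark applies to the set $\mcX_h$ term, which is handled exactly as Term $II$ of the consistency lemma.
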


\begin{proof} Adding and subtracting an interpolant and using the interpolation error estimate (\ref{eq:interpol-general})  we have the  estimate
\begin{align}
\| u - u_h \|_{\Omega_h} &\leq \| u - \pi_h u \|_{\Omega_h} 
+ \| \pi_h u - u_h \|_{\Omega_h}
\\
&\lesssim 
h^4 \| u \|_{H^4(\Omega)} +  \| \pi_h u - u_h \|_{\Omega_h} 
\end{align}
In order to estimate $\| \pi_h u - u_h \|_{\Omega_h}$ we let $\phi_h\in V_h$ be the 
solution to the discrete dual problem 
\begin{align}
(v,\psi)_{\Omega_h} &= A_h(v,\phi_h) \qquad \forall v \in V_h
\end{align}
Setting $v = \pi_h u - u_h$ we obtain the error  representation 
\begin{align}
(\pi_h u - u_h,\psi)_{\Omega_h} &= A_h( \pi_h u - u_h, \phi_h)
\\
&= A_h( \pi_h u - u, \phi_h) + A_h( u - u_h, \phi_h)
\\
&= \underbrace{A_h( \pi_h u - u, \phi_h - \phi)}_{I} + 
\underbrace{A_h( \pi_h u - u,\phi)}_{II} 
+ \underbrace{A_h( u,\phi_h) - l_h(\phi_h)}_{III}
\end{align}
where $\phi$ is the solution to the continuous dual problem 
\begin{equation}
\label{eq:plate-strong-dual}
\nabla \cdot (\sigma(\nabla \phi) \cdot \nabla) = \psi \quad \text{in $\Omega$},
\qquad
\phi = M_{nn}(\phi) = 0  \quad \text{in $\Omega$}
\end{equation}
\paragraph{$\bfI$.} Since $\phi_h$ is the finite element approximation of $\phi$ we have 
the error estimate
\begin{align}
\tn \phi - \phi_h \tn_h \lesssim h^2 \| \phi \|_{H^4(\Omega)} \lesssim h^2 \| \psi \|_{\Omega} 
\end{align}
where we used elliptic regularity (\ref{eq:elliptic-reg}), which directly gives
\begin{align}
A_h( \pi_h u - u, \phi_h - \phi) 
&\leq 
\tn \pi_h u - u \tn_h \tn \phi_h - \phi \tn_h 
\lesssim
h^4 \| u \|_{H^4(\Omega)} \| \psi\|_{\Omega} 
\end{align}

\paragraph{$\bfI\bfI$.} Using the fact that $s_h(\pi_h u - u,\phi) = 0$ since 
$\phi \in H^4(\Omega)$, the partial integration formula (\ref{eq:part-int}), 
the Cauchy-Schwarz inequality, and the interpolation error estimates we obtain
\begin{align}
A_h(\pi_h u - u,\phi) &= (\pi_h u - u, \psi)_{\Omega_h}
-(\nabla_n ( \pi_h u - u),M_{nn}(\phi))_{\partial \Omega_h}
\\
&\qquad  + (T(\pi_h u - u), \phi)_{\partial \Omega_h}
+ \gamma h^{-3}  (\pi_h u - u, \phi )_{\partial \Omega_h}
\\
 &\leq \|\pi_h u - u\|_{\Omega_h} \|\psi\|_{\Omega_h}
+\|\nabla_n ( \pi_h u - u)\|_{\partial \Omega_h}   \|M_{nn}(\phi)\|_{\partial \Omega_h}
\\
&\qquad  + \|T(\pi_h u - u) \|_{\partial \Omega_h}\| \phi\|_{\partial \Omega_h}
+ \gamma h^{-3}  \|\pi_h u - u\|_{\partial \Omega_h} \| \phi \|_{\partial \Omega_h}
\\
 &\lesssim h^4 \|u\|_{H^4(\Omega_h)} \|\psi\|_{\Omega_h}
+ h^{5/2} \|u\|_{H^4(\Omega_h)} \|M_{nn}(\phi)\|_{\partial \Omega_h}
\\
&\qquad  + h^{1/2} \|u\|_{H^4(\Omega_h)}  \| \phi\|_{\partial \Omega_h}
+ \gamma h^{-3} h^{7/2}  \|u\|_{H^4(\Omega_h)}  \| \phi \|_{\partial \Omega_h}
\\
&\lesssim 
\underbrace{(h^4 + h^{5/2} h^2 + h^{1/2} h^4)}_{\lesssim h^4} \|u\|_{H^4(\Omega_h)} \|\psi\|_{\Omega_h}
\end{align}
Here we finally used the bounds
\begin{align}
\|M_{nn}(\phi) \|_{\partial \Omega_h} 
\lesssim \delta^{1/2} \| \phi \|_{H^3(U_\delta(\partial \Omega))} 
\lesssim h^2\| \phi \|_{H^4(\Omega)}\lesssim h^2\| \psi \|_{\Omega}
\end{align}
where we used (\ref{eq:omegah-dist}) to conclude that $\partial \Omega_h \subset U_\delta(\partial \Omega)$ with $\delta \sim h^4$, and
\begin{align}
\| \phi \|_{\partial \Omega_h} 
&\lesssim \delta^{1/2} \| \phi \|_{H^1(U_\delta(\partial \Omega))} 
\lesssim \delta \| \phi \|_{W^1_\infty(U_\delta(\Omega))}
\\
&\qquad \lesssim h^4 \| \phi \|_{W^1_\infty(\Omega \cup U_\delta)}
\lesssim h^4  \| \phi \|_{H^4(\Omega)} h^4  \| \psi \|_{\Omega_h}
\end{align}

\paragraph{$\bfI\bfI\bfI$.} Using (\ref{eq:consistency-bound}) we obtain
\begin{align}
|A_h( u,\phi_h) - (f, \phi_h)_{\Omega_h}|
&\lesssim h^4 \| u \|_{H^4(\Omega)} \tn \phi_h \tn_{h,\bigstar}
\lesssim h^4 \| u \|_{H^4(\Omega)}  \| \psi \|_{\Omega_h}
\end{align}
where we used the estimate 
\begin{align}
\tn \phi_h \tn^2_{h,\bigstar} 
&\lesssim \tn \phi_h -\phi \tn^2_{h,\bigstar} + \tn \phi \tn^2_{h,\bigstar} 
\\
&\lesssim (1 + h^{-3} )\tn \phi_h -\phi \tn^2_{h}  
+ \tn \phi \tn^2_{h} + \| T(\phi) \|^2_{\partial \Omega_h} + h^{-3} \| \phi \|^2_{\partial \omega_h}  
\\
&\lesssim (1 + h^{-3})h^4 \| \phi \|^2_{H^4(\Omega)}
+ \tn \phi \tn^2_{h} + (1 + h^{-3} \delta )\| \phi \|^2_{H^4(\Omega)}
\\
&\lesssim \|\psi\|_{\Omega_h}
\end{align}
where we used (\ref{eq:wbnd-bound-strong}). 

Collecting the estimates of Terms $I-III$ completes the proof.
\end{proof}

\section{Numerics}

\subsection{Implementation} 

We consider two higher order approximations of the boundary: a piecewise cubic $C^0$ approximation or a piecewise cubic $C^1$ approximation. The steps to create the approximate boundary are as follows.
\begin{enumerate}
\item The elements cut by the boundary are located, Fig. \ref{fig:cut1}.
\item Straight segments connecting the intersection points between the boundary and the elements are established, and the geometrical object inside the domain is triangulated for ease if integration, Fig. \ref{fig:cut2}. 
\item The end points of the boundary segments and the inclinations at the endpoints (computed by use of tangent vectors) is used to obtain a $C^1$ interpolant of the boundary, Fig. \ref{fig:cut3}. (This step is skipped in the case of a $C^0$ approximation of the boundary.)
\item The geometry is approximated by a cubic triangle, interpolating the exact boundary ($C^0$ case) or the spline boundary ($C^1$ case), Fig \ref{fig:cut4}.
\end{enumerate}
Note that the approximation of the boundary may partly land outside the element. In such cases, the basis functions of the element containing the straight segment is used also outside of the element.
\subsection{Example}

We consider a circular simply supported plate under uniform load $p$. The plate is of radius $R=1/2$ and has its center at $x=1/2$, $y=1/2$.
Defining $r$ as the distance from the midpoint we then have the exact solution
\[
u=\frac{pR^4}{64\kappa}\left(1-\left(\frac{r}{R}\right)^2\right)\left(\frac{5+\nu}{1+\nu}-\left(\frac{r}{R}\right)^2\right)
\]
see, e.g., \cite{HuJo08}. The constitutive parameters were chosen as $E=10^2$, $\nu=0.3$, $t=10^{-1}$, and the stabilization parameters as
$\beta = 10^{-1}$, $\gamma = 10^2(2\kappa + 2\kappa\nu (1-\nu)^{-1})$.

We compare the convergence in normalized ($|| u- u_h||/||u||$) $L_2$, $H^1$ and piecewise $H^2$ norms in Fig. \ref{fig:conv}. These norms are computed on the discrete geometry, for simplicity the straight segment geometry. The solid lines indicate second, third, and fourth order convergence, respectively from top to bottom, and we note that we observe a slightly higher than optimal rate of convercence of about $O(h^{1/2})$ in all norms.
We note that the continuity of the approximation of the boundary seems not to be crucial as the convergence curves are very close.

Finally, in Fig. \ref{fig:elev} we show an elevation of the solution on one of the meshes in the sequence.

\section{Concluding Remarks}

We have proposed and analyzed a cut finite element method for a rectangular plate element, allowing for curved boundaries. The analysis shows that the method is optimally order convergent and stable. Two different approximations of the boundary have been tested, a standard cubic interpolation of the exact boundary and a cubic spline approximation leading to a continuously differentiable approximation of the boundary. Numerical results indicate that the continuity of the boundary approximation is not crucial. With our method, the simple rectangular element has greatly increased its practical applicability.

\newpage

\begin{figure}
\begin{center}
\includegraphics[scale=0.3]{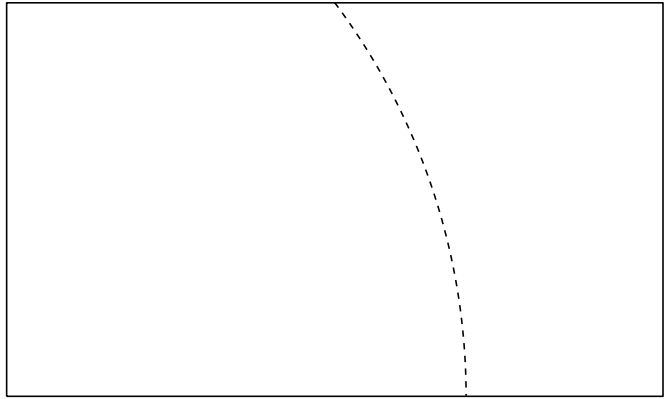}
\end{center}
\caption{Element intersected by the boundary (dashed).\label{fig:cut1}}
\end{figure}
\begin{figure}
\begin{center}
\includegraphics[scale=0.2]{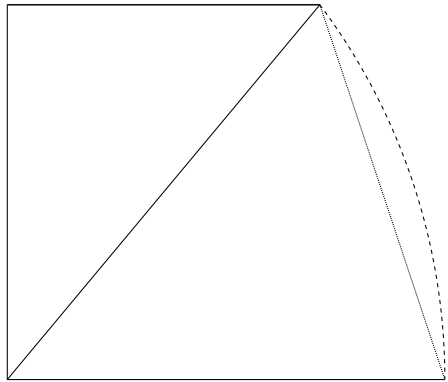}
\end{center}
\caption{Straight line approximation of the boundary (dotted) and triangulation for integration purposes.\label{fig:cut2}}
\end{figure}
\begin{figure}
\begin{center}
\includegraphics[scale=0.2]{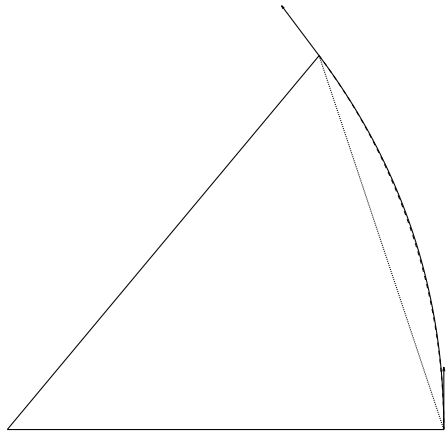}
\end{center}
\caption{Cubic spline approximation os the boundary (solid line).\label{fig:cut3}}
\end{figure}
\begin{figure}
\begin{center}
\includegraphics[scale=0.2]{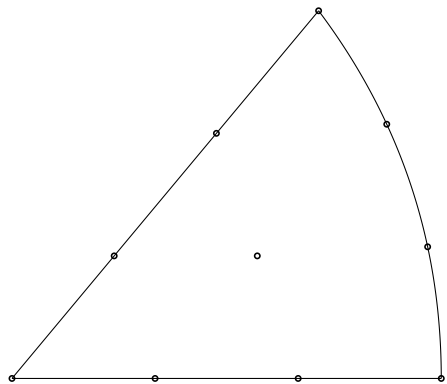}
\end{center}
\caption{Isoparametric cubic triangle approximation of the geometry.\label{fig:cut4}}
\end{figure}
\begin{figure}
\begin{center}
\includegraphics[scale=0.3]{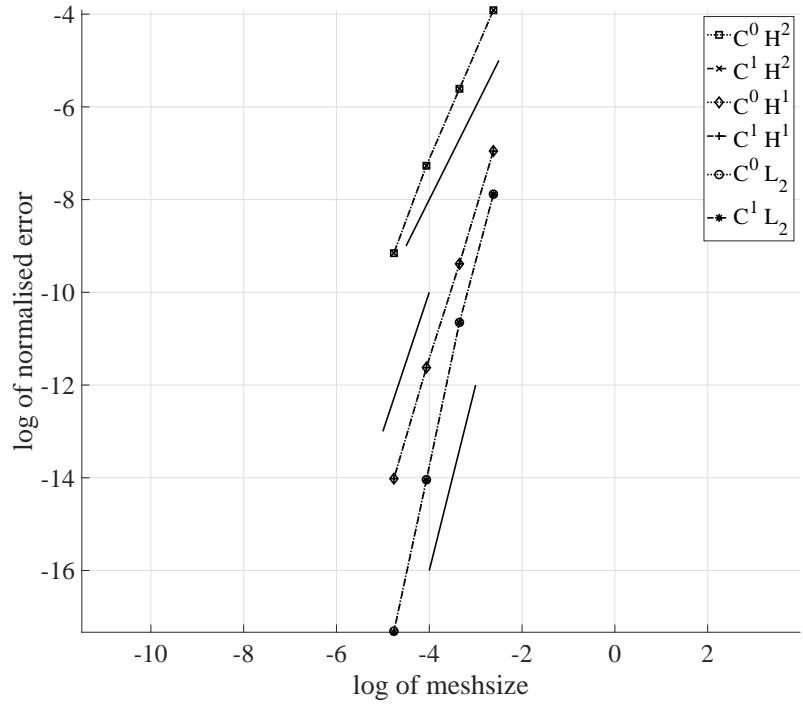}
\end{center}
\caption{Convergence in normalized $L_2$, $H^1$, and piecewise $H^2$ norms.\label{fig:conv}}
\end{figure}

\begin{figure}
\begin{center}
\includegraphics[scale=0.3]{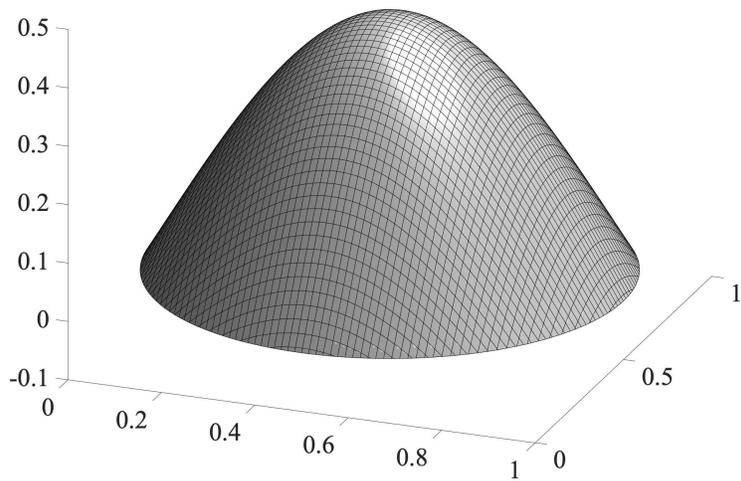}
\end{center}
\caption{Elevation of the discrete solution on one of the meshes in the sequence.\label{fig:elev}}
\end{figure}

\end{document}